\theoremstyle{plain}
\newtheorem{theorem}                {Theorem}      [section]
\newtheorem*{theorem*}                {Theorem}
\newtheorem{proposition}  [theorem]  {Proposition}
\newtheorem{corollary}    [theorem]  {Corollary}
\newtheorem{lemma}        [theorem]  {Lemma}
\theoremstyle{definition}
\newtheorem{remark}       [theorem]  {Remark}
\newtheorem{definition}   [theorem]  {Definition}
\DeclareMathOperator{\trace}{trace} 
\DeclareMathOperator{\Div}{div}
\DeclareMathOperator{\cst}{constant}
\DeclareMathOperator{\sol}{Sol_3}
\numberwithin{equation}{section}
\begin{document}

\title[Biconservative surfaces]{Biconservative surfaces in $\mathbb{S}^2\times\mathbb{R}$ and $\mathbb{H}^2\times\mathbb{R}$}

\author{Dorel~Fetcu}

\thanks{}

\address{Department of Mathematics and Informatics\\
Gh. Asachi Technical University of Iasi\\
Bd. Carol I, 11A \\
700506 Iasi, Romania} \email{dorel.fetcu@academic.tuiasi.ro}

\subjclass[2020]{53C42}

\keywords{Biconservative Surfaces}

\begin{abstract} We find the explicit local equations of biconservative surfaces with non-constant mean curvature in $\mathbb{S}^2\times\mathbb{R}$ and $\mathbb{H}^2\times\mathbb{R}$, when the gradient of the mean curvature function is a principal direction.
\end{abstract}

\maketitle

\section{Introduction}

Biharmonic maps between two Riemannian manifolds are critical points of the $L^{2}$-norm of the tension field. This variational problem was suggested in the mid 1960s by J.~Eells and J.~H.~Sampson in their seminal paper \cite{ES} and it was first studied by~G.-Y.~Jiang \cite{Jiang}.

In approximately the same period, B.-Y.~Chen \cite{43} defined biharmonic submanifolds in Euclidean spaces $\mathbb{R}^n$ as isometric immersions with harmonic mean curvature vector field. We note that the two definitions of biharmonicity agree in $\mathbb{R}^n$. Since biharmonic surfaces in $\mathbb{R}^{3}$ proved to be minimal (see \cite{Ishikawa, J2}), this result led to Chen's Conjecture that all biharmonic submanifolds of Euclidean spaces are minimal \cite{43}. Although a great number of articles gave positive partial answers to support it, the conjecture is still open.   

However, there are many interesting examples of proper-biharmonic submanifolds, i.e., biharmonic non-minimal, in spaces like Euclidean spheres, complex projective spaces, and other ambient manifolds with convenient curvature properties (for detailed accounts see \cite{FO, Chen-Ou}). 

The notion of a biconservative submanifold has developed from the theory of biharmonic submanifolds by only requiring the vanishing of the tangent part of the bitension field. Although a rather new one, this subject is already well established. Restricting only to biconservative surfaces, we could mention some recent and (a few) older articles like \cite{AK, CMOP, Hasanis, LMO, MOR, MOP, SN, Nistor1, NO} as illustrative examples for studies on this topic. 

Biharmonic and biconservative surfaces in product spaces $\mathbb{S}^2\times\mathbb{R}$ and $\mathbb{H}^2\times\mathbb{R}$ were studied in \cite{MOP,OW}, with the additional hypothesis that these surfaces also have constant mean curvature (CMC surfaces). This situation proved to be quite rigid since it turned out that the only CMC biconservative surfaces are Hopf tubes over curves with constant geodesic curvature (see \cite{MOP}). Since $\mathbb{S}^2\times\mathbb{R}$ and $\mathbb{H}^2\times\mathbb{R}$ are two of the eight Thurston geometries, it is interesting to have a glimpse of the studies on non-CMC biconservative surfaces in the other six ambient spaces. Until now, this problem has been addressed only for surfaces of $\sol$ \cite{FSol} and, in a very extensive way, in space forms. The explicit local equations of non-CMC biconservative surfaces in space forms were obtained  in \cite{CMOP}, then complete such surfaces were constructed in \cite{SN} and their  uniqueness proved in \cite{NO}. The existence of compact non-CMC biconservative surfaces in the Euclidean sphere $\mathbb{S}^3$ was proved in \cite{MP}.   

In the case of non-CMC biconservative surfaces in space forms it turned out that the gradient of the mean curvature function always is a principal direction (see \cite{CMOP}). It seems then interesting to consider non-CMC biconservative surfaces with this property in product spaces $M^2(c)\times\mathbb{R}$, where $M^2(c)$ is either $\mathbb{S}^2$ or $\mathbb{H}^2$, as $c=1$ or $c=-1$. In our paper, we study the geometry of such surfaces and then find their explicit local equations. 

\textbf{Conventions.}
Throughout the paper surfaces are oriented, we use the following sign conventions
$$
R(X,Y)Z=\nabla_{X}\nabla_{Y}Z-\nabla_{Y}\nabla_{X}Z-\nabla_{[X,Y]}Z,\quad
\Delta=\trace\nabla^{2}=\trace(\nabla\nabla-\nabla_{\nabla}),
$$
and objects on $M(c)\times\mathbb{R}$ are indicated by $\overline{(\cdot)}$, while those on $\mathbb{R}^4$ or $\mathbb{L}^3\times\mathbb{R}$ are denoted by $\widetilde{(\cdot)}$.

\textbf{Acknowledgments.} The author would like to thank Cezar Oniciuc for useful discussions and suggestions, and also Max Planck Institute for Mathematics in Bonn for its hospitality and financial support.

\section{Preliminaries}

Biharmonic maps $\phi : M^{m} \to N^{n}$ between two Riemannian manifolds are critical points of the bienergy functional
$$
E_{2}:C^{\infty}(M,N)\to \mathbb{R}, \quad E_{2}(\phi)=\frac{1}{2}\int_{M} |\tau(\phi)|^{2} dv,
$$
where $\tau(\phi)= \trace  \nabla d \phi$ is the tension field of $\phi$. The Euler-Lagrange equation, also called the biharmonic equation, was derived by G.-Y.~Jiang \cite{Jiang}
\begin{eqnarray}\label{tau-2}
\tau_{2}(\phi)=\Delta \tau(\phi)-\trace \bar R(d\phi(\cdot),\tau(\phi))d \phi (\cdot)=0,
\end{eqnarray}
where $\tau_{2}(\phi)$ is the bitension field of $\phi$ and $\bar R$ is the curvature tensor field of $N$.

It is easy to see that any harmonic map is biharmonic and, therefore, we are interested in the non-harmonic biharmonic ones, called proper-biharmonic.

Next, if we consider a given map $\phi$ and let the domain metric vary, one obtains a functional 
$$
\mathcal{F}_{2}:\mathcal{G}\to \mathbb{R}, \quad \mathcal{F}_{2}(g)=E_{2}(\phi),
$$
on the set $\mathcal{G}$ of Riemannian metrics on $M$. Critical points of this functional are characterized by the vanishing of the stress-energy
tensor $S_{2}$ of the bienergy (see \cite{LMO}). This tensor, introduced in \cite{Jiang87}, is given by
\begin{eqnarray*}
S_{2}(X,Y)&=&\frac{1}{2}\vert \tau (\phi)\vert ^{2}\langle X,Y \rangle +\langle d \phi, \nabla \tau (\phi) \rangle \langle X, Y \rangle-\langle d\phi (X),\nabla_{Y} \tau (\phi)\rangle
\\
&\ & -\langle d\phi (Y),\nabla_{X} \tau (\phi)\rangle,
\end{eqnarray*}
and satisfies
$$
\Div S_{2}=-\langle \tau_{2}(\phi), d\phi\rangle.
$$

For isometric immersions, $(\Div S_{2})^{\sharp} =-\tau_{2}(\phi)^{\top}$, where $\tau_{2}(\phi)^{\top}$ is the
tangent part of the bitension field.

\begin{definition}
A submanifold $\phi:M^{m} \to N^{n}$ of a Riemannian manifold $N^{n}$ is called biconservative if $\Div S_{2}=0$.
\end{definition}

It is again easy to see that a submanifold is biconservative if and only if the tangent part of the bitension field vanishes.

Now, let us consider a surface $\Sigma^2$ in a Riemannian manifold $N^n$. The Gauss and the Weingarten equations of the surface
$$
\bar\nabla_XY=\nabla_XY+\sigma(X,Y)\quad\textnormal{and}\quad \bar\nabla_XV=-A_VX+\nabla^{\perp}_XV,
$$
hold for all vector fields $X$ and $Y$ tangent to the surface and $V$ normal to $\Sigma^2$, where $\nabla$ is the induced connection on $\Sigma^2$, $\sigma$ is the second fundamental form, $A$ the shape operator, and $\nabla^{\perp}$ is the connection in the normal bundle. If the ambient space $N$ is three-dimensional, the mean curvature vector field of $\Sigma^2$ is given by $H=f\eta$, where $f=(1/2)\trace A$ is the mean curvature function and $\eta$ is a unit vector field normal to $\Sigma^2$. 

If $f$ is constant, then $\Sigma^2$ is called a constant mean curvature (CMC) surface.

The Codazzi equation of $\Sigma^2$ is
\begin{equation}\label{eq:Codazzi}
\begin{array}{cl}
\langle \bar R(X,Y)Z,V\rangle=&\langle(\nabla^{\perp}_X\sigma)(Y,Z),V\rangle-\langle(\nabla^{\perp}_Y\sigma)(X,Z),V\rangle,
\end{array}
\end{equation}
for any vector fields $X$, $Y$, $Z$ tangent to $\Sigma^2$, and any normal vector field $V$, where 
$$
(\nabla^{\perp}_X\sigma)(Y,Z)=\nabla^{\perp}_X\sigma(Y,Z)-\sigma(\nabla_XY,Z)-\sigma(Y,\nabla_XZ).
$$

The Gauss equation of the surface is 
\begin{equation}\label{eq:Gauss}
\langle R(X,Y)Z,W\rangle=\langle\bar R(X,Y)Z,W\rangle+\langle AY,Z\rangle\langle AX,W\rangle-\langle AX,Z\rangle\langle AY,W\rangle,
\end{equation}
for all tangent vector fields $X$, $Y$, $Z$, and $W$, where $R$ is the curvature tensor of $\Sigma^2$.

The following theorem, that splits the biharmonic equation into its normal and tangent parts, was proved in the more general case of hypersurfaces in any Riemannian manifold in \cite{Ou} (see also \cite{LMO} for the case of arbitrary codimension).

\begin{theorem}[\cite{Ou}]\label{decomposition}
A surface $\Sigma^2$ in a Riemannian manifold $N^3$ is biharmonic if and only if
$$
\begin{cases}
\Delta f-f|A|^2-f\langle\trace(\bar R(\cdot,\eta)\cdot),\eta\rangle=0\\
A(\nabla f)+f\nabla f+f\trace(\bar R(\cdot,\eta)\cdot)^{\top}=0.
\end{cases}
$$ 
\end{theorem}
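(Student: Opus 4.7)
The plan is to derive both equations simultaneously by splitting the biharmonic equation~\eqref{tau-2}, applied to the isometric immersion $\phi:\Sigma^{2}\to N^{3}$, into its tangent and normal components. Since $\phi$ is isometric, $\tau(\phi)=\trace\sigma=2f\eta$, so~\eqref{tau-2} reduces (after dividing by $2$) to
$$
\Delta(f\eta)-f\,\trace\bar R(\cdot,\eta)\cdot=0,
$$
where $\Delta$ now denotes the rough Laplacian acting on sections of $\phi^{-1}TN$ along $\Sigma^{2}$.

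First I would compute $\bar\nabla_{X}(f\eta)=X(f)\eta-fAX$, using the Weingarten formula together with $\nabla^{\perp}\eta=0$ (automatic because $N^{3}$ is three-dimensional and $\eta$ has unit length). Then, picking a local orthonormal frame $\{e_{1},e_{2}\}$ on $\Sigma^{2}$ that is geodesic at a chosen point $p$, I would iterate the derivation at $p$, applying Weingarten again and the Gauss formula $\bar\nabla_{X}(AY)=\nabla_{X}(AY)+\langle AX,AY\rangle\eta$. Tracing in $i$ gives
$$
\Delta(f\eta)=(\Delta f-f|A|^{2})\eta-2A(\nabla f)-f\,\trace(\nabla A),
$$
where $\trace(\nabla A)=\sum_{i}(\nabla_{e_{i}}A)e_{i}$.

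The crucial step is to rewrite $\trace(\nabla A)$ in terms of $\nabla f$ and the ambient curvature. In codimension one,~\eqref{eq:Codazzi} reads $(\nabla_{X}A)Y-(\nabla_{Y}A)X=-\bigl(\bar R(X,Y)\eta\bigr)^{\top}$; combining this identity with the elementary fact $2X(f)=\trace(\nabla_{X}A)$ and the standard symmetries of $\bar R$, one obtains
$$
\trace(\nabla A)=2\nabla f+\bigl(\trace\bar R(\cdot,\eta)\cdot\bigr)^{\top}.
$$
Substituting this back into the expression for $\Delta(f\eta)$ and separating tangent from normal parts of the biharmonic equation produces the two scalar equations in the statement; the converse follows by reassembling the components.

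The main obstacle is bookkeeping rather than conceptual: it lies in the Codazzi reduction of $\trace(\nabla A)$, since the tangential part of $\trace\bar R(\cdot,\eta)\cdot$ enters both through that reduction and directly through the curvature term of~\eqref{tau-2}, and one has to track signs and curvature symmetries carefully so that the two contributions combine into exactly the coefficients of the theorem.
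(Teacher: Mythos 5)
Your proposal is correct and is essentially the standard proof: the paper itself states this theorem without proof, citing \cite{Ou}, and the argument there (and in \cite{LMO}) is exactly your computation --- write $\tau(\phi)=2f\eta$, expand the rough Laplacian using the Gauss and Weingarten formulas to get $\Delta(f\eta)=(\Delta f-f|A|^{2})\eta-2A(\nabla f)-f\trace(\nabla A)$, and reduce $\trace(\nabla A)=2\nabla f+\bigl(\trace\bar R(\cdot,\eta)\cdot\bigr)^{\top}$ via Codazzi before separating normal and tangent parts. All signs and coefficients in your sketch check out against the paper's conventions for $\Delta$, $R$, and the Codazzi equation \eqref{eq:Codazzi}, so nothing further is needed.
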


\begin{corollary}\label{c:bicons}
A surface $\Sigma^2$ in a Riemannian manifold $N^3$ is biconservative if and only if
\begin{equation}\label{eq:bicons}
A(\nabla f)+f\nabla f+f\trace(\bar R(\cdot,\eta)\cdot)^{\top}=0.
\end{equation}
\end{corollary}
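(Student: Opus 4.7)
The corollary is essentially a direct read-off from Theorem \ref{decomposition}, combined with the identification of biconservativity as the vanishing of the tangent part of the bitension field. First, I would recall the identity stated just before the definition,
$$(\Div S_{2})^{\sharp}=-\tau_{2}(\phi)^{\top},$$
valid for any isometric immersion $\phi$. This shows that the biconservative condition $\Div S_{2}=0$ is equivalent to $\tau_{2}(\phi)^{\top}=0$, i.e., to the vanishing of the tangent component of the bitension field along $\Sigma^{2}$.

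Next, I would invoke Theorem \ref{decomposition}, which splits the full biharmonic equation $\tau_{2}(\phi)=0$ into its normal and tangent parts on a surface in a three-dimensional ambient manifold. The second equation of the system displayed there,
$$A(\nabla f)+f\nabla f+f\trace(\bar R(\cdot,\eta)\cdot)^{\top}=0,$$
is obtained in the proof of that theorem by projecting $\tau_{2}(\phi)$ onto the tangent bundle of $\Sigma^{2}$, and therefore it is exactly the condition $\tau_{2}(\phi)^{\top}=0$. Chaining the two equivalences yields the statement of the corollary.

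\textbf{Main obstacle.} There is essentially none, since the heavy computation (splitting the bitension field into its normal and tangent components for a hypersurface) is already encapsulated in the cited Theorem \ref{decomposition}. The only point requiring a brief justification is that the tangent equation in Theorem \ref{decomposition} truly coincides with the projection of $\tau_{2}(\phi)$ onto $T\Sigma^{2}$, rather than being a rescaled multiple of it; this follows from the derivation given in \cite{Ou}, and any nonzero overall factor is immaterial to the equivalence with $\Div S_{2}=0$.
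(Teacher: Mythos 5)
Your argument is correct and is exactly the route the paper takes (implicitly, since the corollary is stated without a separate proof): biconservativity is identified with $\tau_{2}(\phi)^{\top}=0$ via $(\Div S_{2})^{\sharp}=-\tau_{2}(\phi)^{\top}$, and the displayed equation is precisely the tangent part of the bitension field as given in Theorem \ref{decomposition}. Nothing further is needed.
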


We end this section by recalling that the expression of the curvature  tensor $\bar R$ of a product space $M^2(c)\times\mathbb{R}$, where $M^2(c)$ is a space form, is given by
\begin{equation}\label{eq:barR}
\begin{array}{ll}
\bar R(X,Y)Z=&c\{\langle Y, Z\rangle X-\langle X, Z\rangle Y-\langle Y,\xi\rangle\langle Z,\xi\rangle X+\langle X,\xi\rangle\langle Z,\xi\rangle Y\\ \\&+\langle X,Z\rangle\langle Y,\xi\rangle\xi-\langle Y,Z\rangle\langle X,\xi\rangle\xi\},
\end{array}
\end{equation}
where $\xi$ is the unit vector tangent to $\mathbb{R}$.

\section{Biconservative surfaces in $M^2(c)\times\mathbb{R}$}

The only known examples of biconservative surfaces in product spaces  were, until now, Hopf tubes over curves with constant geodesic curvature. In \cite{MOP} the authors proved that such tubes are the only CMC biconservative surfaces in these spaces. In the following we will consider the case of non-CMC biconservative surfaces in $M^2(c)\times\mathbb{R}$, with the additional property that $\nabla f$ is a principal direction. 

Let $\Sigma^2$ be a biconservative surface in $M^2(c)\times\mathbb{R}$ such that $\nabla f\neq 0$ at a point $p\in\Sigma^2$. There exists a neighborhood $V$ of $p$ with $\nabla f\neq 0$ on $V$. Since $f$ cannot vanish on $V$, there exists an open subset $U$ of $V$ such that $f\neq 0$ at each point of $U$. Moreover, we can assume that $f>0$ on $U$. Next, let us define an orthonormal frame field $\{X_1,X_2\}$ on $U$ as
$$
X_1=\frac{\nabla f}{|\nabla f|},\quad X_2\perp X_1,\quad |X_2|=1.
$$
One can now see that
$$
\nabla f=X_1(f)X_1+X_2(f)X_2
$$
leads to
\begin{equation}\label{eq:nablaf}
X_1(f)=|\nabla f|>0\quad\textnormal{and}\quad X_2(f)=0.
\end{equation}

Henceforth, throughout the paper, we will assume that $\nabla f$ is a principal direction. Then, from Corollary \ref{c:bicons} and formula \eqref{eq:barR}, the biconservative condition is equivalent to
\begin{equation}\label{eq:l1}
AX_1=-\left(f+\frac{cf}{|\nabla f|}\sin\theta\cos\theta\right)X_1=\lambda_1X_1.
\end{equation}
It follows that also $X_2$ is a principal direction and 
\begin{equation}\label{eq:l2}
AX_2=\left(3f+\frac{cf}{|\nabla f|}\sin\theta\cos\theta\right)X_2=\lambda_2X_2.
\end{equation}
Moreover, we also have that $X_2\perp\xi$ and we can write
\begin{equation}\label{eq:xi}
\xi=\cos\theta X_1+\sin\theta\eta
\end{equation}
where $\theta$ is a non-constant real valued function on $U$ and $H=f\eta$, with $\eta$ a unit vector field normal to the surface. We note that $\theta$ cannot be a constant since for biconservative surfaces in $M^2(c)\times\mathbb{R}$ this is equivalent to the fact that the surface is CMC (see \cite{MOP}).

\begin{lemma}\label{lemma1}
On the set $U$ the following identities hold:
\begin{enumerate}

\item $X_1(\theta)=-\lambda_1$;

\item $X_2(\theta)=0$;

\item $\cos\theta\neq 0$ and $\sin\theta\neq 0$ at each point in $U$;

\item $\nabla_{X_1}X_1=\nabla_{X_1}X_2=0$;

\item $\nabla_{X_2}X_1=\lambda_2\tan\theta X_2$;

\item $\nabla_{X_2}X_2=-\lambda_2\tan\theta X_1$;

\item $X_1(\lambda_2)+\lambda_2(\lambda_2-\lambda_1)\tan\theta+c\sin\theta\cos\theta=0$;

\item $X_2(\lambda_1)=X_2(\lambda_2)=0$.

\end{enumerate}
\end{lemma}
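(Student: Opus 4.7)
The key observation is that the vertical unit vector field $\xi$ is parallel in the ambient product $M^{2}(c)\times\mathbb{R}$, i.e.\ $\bar\nabla_{Y}\xi=0$ for every tangent field $Y$. Since $AX_{1}=\lambda_{1}X_{1}$, one immediately has $\sigma(X_{1},X_{2})=\langle AX_{1},X_{2}\rangle\eta=0$; and orthonormality of $\{X_{1},X_{2}\}$ encodes the Levi--Civita covariant derivatives in just two scalars, $\nabla_{X_{1}}X_{1}=aX_{2}$ and $\nabla_{X_{2}}X_{1}=bX_{2}$, with $\nabla_{X_{1}}X_{2}=-aX_{1}$ and $\nabla_{X_{2}}X_{2}=-bX_{1}$. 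The plan is to expand $\bar\nabla_{X_{1}}\xi=0$ and $\bar\nabla_{X_{2}}\xi=0$ via Gauss--Weingarten applied to $\xi=\cos\theta\,X_{1}+\sin\theta\,\eta$, and then read off the components in the adapted frame $\{X_{1},X_{2},\eta\}$.

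The expansion of $\bar\nabla_{X_{1}}\xi=0$ produces the scalar identities
\[
\sin\theta\bigl(X_{1}(\theta)+\lambda_{1}\bigr)=0,\qquad a\cos\theta=0,\qquad \cos\theta\bigl(X_{1}(\theta)+\lambda_{1}\bigr)=0,
\]
while $\bar\nabla_{X_{2}}\xi=0$ yields
\[
\sin\theta\,X_{2}(\theta)=0,\qquad b\cos\theta=\lambda_{2}\sin\theta,\qquad \cos\theta\,X_{2}(\theta)=0.
\]
Since $\sin\theta$ and $\cos\theta$ never vanish simultaneously, the first and third identities in each block immediately yield items (1) and (2).

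For item (3) I argue pointwise. If $\cos\theta(p)=0$, then $\sin\theta(p)\neq 0$, so $b\cos\theta=\lambda_{2}\sin\theta$ forces $\lambda_{2}(p)=0$; but $\cos\theta(p)=0$ gives $\lambda_{2}(p)=3f(p)\neq 0$, a contradiction. If $\sin\theta$ vanished on an open subset of $U$, then $\theta$ would be locally constant there and (1) would force $\lambda_{1}=0$, hence $f=0$, again a contradiction; thus $\{\sin\theta=0\}$ has empty interior and, after shrinking $U$ if necessary, one may assume $\sin\theta\neq 0$ everywhere on $U$. With (3) in hand, the second equation in each block gives $a=0$ (whence (4)) and $b=\lambda_{2}\tan\theta$ (whence (5)); item (6) follows from $\nabla_{X_{2}}X_{2}=-bX_{1}$.

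For items (7) and (8) I apply the Codazzi equation \eqref{eq:Codazzi} with $V=\eta$. Taking $(X,Y,Z)=(X_{1},X_{2},X_{2})$, the curvature formula \eqref{eq:barR} gives $\langle\bar R(X_{1},X_{2})X_{2},\eta\rangle=-c\sin\theta\cos\theta$, and expanding the right-hand side using the structural identities (4)--(6) together with $\sigma(X_{1},X_{2})=0$ produces $X_{1}(\lambda_{2})-b(\lambda_{1}-\lambda_{2})$; equating and substituting $b=\lambda_{2}\tan\theta$ yields precisely (7). Taking $(X,Y,Z)=(X_{1},X_{2},X_{1})$, the $\bar R$-side has no $\eta$-component and the Codazzi side reduces to $a(\lambda_{1}-\lambda_{2})-X_{2}(\lambda_{1})$, whence $X_{2}(\lambda_{1})=0$ by (4); the second half of (8) then follows by applying $X_{2}$ to $\lambda_{1}+\lambda_{2}=2f$ and using $X_{2}(f)=0$. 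The main obstacle is item (3): the formulas (5)--(7) involve $\tan\theta$ and would be singular on $\{\cos\theta=0\}$, so one must combine the pointwise $\cos\theta\neq 0$ argument with the density argument for $\sin\theta\neq 0$ before the later identities can be safely asserted throughout $U$.
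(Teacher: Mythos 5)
Your overall strategy --- expanding $\bar\nabla_{X_1}\xi=0$ and $\bar\nabla_{X_2}\xi=0$ in the adapted frame $\{X_1,X_2,\eta\}$ to read off items (1), (2), (4)--(6), and then obtaining (7) and (8) from the Codazzi equation --- is exactly the paper's, and those parts are correct, including the pointwise argument for $\cos\theta\neq 0$ via $\lambda_2=3f\neq 0$. The genuine gap is in the second half of item (3). Your argument only shows that $\{\sin\theta=0\}$ has empty interior: if $\sin\theta$ vanished on an open subset, $\theta$ would be locally constant there, so $\lambda_1=-X_1(\theta)=0$, contradicting $\lambda_1=-f<0$ from \eqref{eq:l1}. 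This does not exclude $\sin\theta$ vanishing at individual points, and ``shrinking $U$ if necessary'' does not prove the statement as written, which asserts $\sin\theta\neq 0$ at \emph{each} point of the given $U$; nothing in your frame equations or in (5)--(7) produces a contradiction at an isolated zero of $\sin\theta$, since $\tan\theta$ is perfectly regular there.

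The paper closes this with a pointwise argument that uses the biconservative equations in an essential way. Suppose $\sin\theta(q)=0$ at some $q\in U$. Then identity (7) evaluated at $q$ forces $X_1(\lambda_2)(q)=0$, because both the $\tan\theta$ term and the $c\sin\theta\cos\theta$ term vanish there. On the other hand, differentiating the explicit expression \eqref{eq:l2} for $\lambda_2$ along $X_1$ and evaluating at $q$ (where $\sin\theta\cos\theta=0$ and $\cos 2\theta=1$), using $X_1(f)=|\nabla f|$ and $X_1(\theta)=-\lambda_1$, gives
$$
3|\nabla f|-\lambda_1\frac{f}{|\nabla f|}=0,
$$
hence $\lambda_1(q)=3|\nabla f|^2/f>0$; but \eqref{eq:l1} at $q$ reads $\lambda_1(q)=-f<0$, a contradiction. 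You should replace your density-and-shrinking step with this (or an equivalent pointwise) argument; note that it requires item (7) to be in place first, which your ordering already permits since your proof of (7) only needs $\cos\theta\neq 0$.
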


\begin{proof} The vector field $\xi$ is parallel and then, from \eqref{eq:xi}, we have
\begin{eqnarray*}\label{eq:1}
\bar\nabla_{X_1}\xi&=&-X_1(\theta)\sin\theta X_1+\cos\theta(\nabla_{X_1}X_1+\sigma(X_1,X_1))+X_1(\theta)\cos\theta\eta-\sin\theta AX_1\\&=&0
\end{eqnarray*}
and
\begin{eqnarray*}\label{eq:2}
\bar\nabla_{X_2}\xi&=&-X_2(\theta)\sin\theta X_1+\cos\theta(\nabla_{X_2}X_1+\sigma(X_1,X_2))+X_2(\theta)\cos\theta\eta-\sin\theta AX_2\\&=&0.
\end{eqnarray*}
Since $X_1$ and $X_2$ are principal directions, the first two items of the lemma readily follow. Further, by taking the inner product with $X_2$, we have
$$
\cos\theta\langle\nabla_{X_1}X_1,X_2\rangle=0\quad\textnormal{and}\quad
\cos\theta\langle\nabla_{X_2}X_1,X_2\rangle=\lambda_2\sin\theta.
$$

Let us assume now that $\cos\theta=0$ at a point $q\in U$. Then $\lambda_2=0$ at $q$, but this cannot happen since $\lambda_2=3f$ at such a point, as shown by \eqref{eq:l2}. This means that $\cos\theta\neq 0$ throughout $U$ and then $\nabla_{X_1}X_1=0$, $\nabla_{X_2}X_1=\lambda_2\tan\theta X_2$, and $X_2(\lambda_1)=0$. Since $X_2(f)=0$, we also have $X_2(\lambda_2)=0$.

From Codazzi equation \eqref{eq:Codazzi}, it follows that
$$
\langle(\nabla_{X_1}A)X_2-(\nabla_{X_2}A)X_1,Z\rangle=\langle\bar R(X_1,X_2)Z,\eta\rangle,
$$
for all tangent vectors $Z$. Using \eqref{eq:barR}, one easily obtains
$$
X_1(\lambda_2)+\lambda_2(\lambda_2-\lambda_1)\tan\theta+c\sin\theta\cos\theta=0.
$$

Finally, if $\sin\theta=0$ at a point $q\in U$, then, at this point, $\nabla_{X_2}X_1=0$, which implies $X_1(\lambda_2)=0$ at $q$. From the biconservative equation \eqref{eq:l2}, we know that, on $U$, we have
$$
X_1(\lambda_2)=3X_1(f)+X_1\left(\frac{f}{|\nabla f|}\right)\sin\theta\cos\theta+\frac{f}{|\nabla f|}X_1(\theta)\cos(2\theta).
$$
Therefore, since $X_1(\theta)=-\lambda_1$, at $q$ one obtains
$$
3|\nabla f|-\lambda_1\frac{f}{|\nabla f|}=0,
$$
i.e., $\lambda_1=3|\nabla f|^2/f>0$. On the other hand, still at $q$, equation \eqref{eq:l1} gives $\lambda_1=-f<0$, which is a contradiction and we conclude. 
\end{proof}

\begin{remark} The Gauss equation \eqref{eq:Gauss} of our surface $\Sigma^2$ is
$$
K=\lambda_1\lambda_2+c\sin^2\theta,
$$
where $K$ is the Gaussian curvature of $\Sigma^2$. Computing $K$ by using Lemma \ref{lemma1} we can see that the Gauss equation does not lead to any new information.
\end{remark}

\begin{proposition}\label{prop1} The eigenfunction $\lambda_2$ can be written as $\lambda_2=a\cos\theta$, where $a$ is a real valued function on $\Sigma^2$ such that
$$
X_1(\theta)=a\cos\theta-2f,\quad X_1(a)+(a^2+c)\sin\theta=0,\quad\textnormal{and}\quad X_2(a)=0.
$$
\end{proposition}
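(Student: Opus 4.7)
The plan is to extract $a$ directly from the expression for $\lambda_2$, and then verify the three claimed equations by plugging into the identities already established in Lemma \ref{lemma1}.

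First, I would use the expressions \eqref{eq:l1} and \eqref{eq:l2} for the principal curvatures to observe the key algebraic fact $\lambda_1 + \lambda_2 = 2f$, so that
$$
\lambda_2 - \lambda_1 = 2\lambda_2 - 2f, \qquad X_1(\theta) = -\lambda_1 = \lambda_2 - 2f,
$$
where the second identity combines item (1) of Lemma \ref{lemma1} with $\lambda_1 = 2f - \lambda_2$. Since $\cos\theta \neq 0$ on $U$ by Lemma \ref{lemma1}(3), I can define $a := \lambda_2/\cos\theta$, so that $\lambda_2 = a\cos\theta$. Substituting into $X_1(\theta) = \lambda_2 - 2f$ immediately yields the first claimed equation $X_1(\theta) = a\cos\theta - 2f$.

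Next, $X_2(a) = 0$ follows at once: differentiating $\lambda_2 = a\cos\theta$ along $X_2$ and using $X_2(\theta) = 0$ from Lemma \ref{lemma1}(2) gives $X_2(\lambda_2) = X_2(a)\cos\theta$, and the left-hand side vanishes by Lemma \ref{lemma1}(8), so $X_2(a) = 0$.

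The remaining equation is the substantive computation. I would start from Lemma \ref{lemma1}(7),
$$
X_1(\lambda_2) + \lambda_2(\lambda_2-\lambda_1)\tan\theta + c\sin\theta\cos\theta = 0,
$$
and insert $\lambda_2 = a\cos\theta$, $\lambda_2 - \lambda_1 = 2(a\cos\theta - f)$, together with
$$
X_1(\lambda_2) = X_1(a)\cos\theta - a\sin\theta\, X_1(\theta) = X_1(a)\cos\theta - a\sin\theta(a\cos\theta - 2f).
$$
The $2af\sin\theta$ terms cancel and the $a^2\sin\theta\cos\theta$ terms combine, leaving
$$
X_1(a)\cos\theta + (a^2+c)\sin\theta\cos\theta = 0,
$$
and dividing by $\cos\theta$ yields the final equation $X_1(a) + (a^2+c)\sin\theta = 0$.

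The only real subtlety is bookkeeping: making sure the identification $\lambda_2 - \lambda_1 = 2(a\cos\theta - f)$ and the derivative $X_1(\lambda_2)$ are combined in the right way so the $f$-dependent terms drop out. No new geometric input is needed beyond Lemma \ref{lemma1}; this is essentially a clean rewriting of the Codazzi-derived ODE in the new unknown $a$.
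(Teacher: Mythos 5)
Your proposal is correct and follows essentially the same route as the paper: define $a=\lambda_2/\cos\theta$ using Lemma \ref{lemma1}(3), get the first identity from $\lambda_1+\lambda_2=2f$ and $X_1(\theta)=-\lambda_1$, get $X_2(a)=0$ from items (2) and (8), and obtain the ODE for $a$ by substituting into the Codazzi identity of item (7). The paper leaves the last substitution as ``a straightforward computation,'' which you have simply carried out explicitly and correctly.
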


\begin{proof} As we have seen in Lemma \ref{lemma1}, $\cos\theta\neq 0$ at each point of $U$. Then we can write $\lambda_2=a\cos\theta$, with $a:\Sigma^2\to\mathbb{R}$. Since $\lambda_1=-X_1(\theta)$ and $\lambda_1+\lambda_2=2f$, it follows that $X_1(\theta)=a\cos\theta-2f$. Moreover, $X_2(\lambda_2)=0$, $X_2(\theta)=0$, and $X_2(f)=0$ imply that also $X_2(a)=0$.

Next, again from Lemma \ref{lemma1}, we know that
$$
X_1(\lambda_2)+\lambda_2(\lambda_2-\lambda_1)\tan\theta+c\sin\theta\cos\theta=0,
$$
which, after a straightforward computation, using $X_1(\theta)=a\cos\theta-2f$ and again the fact that $\cos\theta\neq 0$ everywhere, gives
$$
X_1(a)+(a^2+c)\sin\theta=0,
$$
and this concludes the proof.
\end{proof}

\subsection{Biconservative surfaces in $\mathbb{S}^2\times\mathbb{R}$} In the following, the space form $M^2(c)$ will be the Euclidean sphere $\mathbb{S}^2$, i.e., $c=1$. We look at $\mathbb{S}^2\times\mathbb{R}$ as a submanifold in $\mathbb{R}^3\times\mathbb{R}=\mathbb{R}^4$. Our first main result gives the explicit local equation of a non-CMC biconservative surface in $\mathbb{S}^2\times\mathbb{R}$, with $\nabla f$ a principal direction, viewed as a surface in $\mathbb{R}^4$.

\begin{theorem}\label{thm:local_sxr} Let $\Sigma^2$ be a biconservative surface in $\mathbb{S}^2\times\mathbb{R}$ with $f>0$ and $\nabla f\neq 0$ at each point on the surface, such that $\nabla f$ is a principal direction. Then, locally, $\Sigma^2\subset\mathbb{R}^4$ can be parametrized as
\begin{equation}\label{eq:local_sxr}
\Phi(u,v)=\left(\frac{\cos v}{\sqrt{a^2+1}}C_1+\frac{\sin v}{\sqrt{a^2+1}}C_2+\frac{a}{\sqrt{a^2+1}}(C_1\times C_2),\int_{u_0}^u\cos\theta(t)dt\right),
\end{equation}
where $C_1,C_2\in\mathbb{R}^3$ are two constant orthonormal vectors, and $a=a(u)$ and $\theta=\theta(u)$ are two real valued functions such that 
$$
\sin\theta\neq 0,\quad \cos\theta\neq 0,\quad a'+(a^2+1)\sin\theta=0,
$$ 
and
\begin{equation}\label{eq:bicons_sxr}
\left(\theta'-a\cos\theta\right)\left(\theta'-a\cos\theta\right)'+(a^2+1)\left(\frac{(\theta')^2-a^2\cos^2\theta}{a^2+1}\right)'=0.
\end{equation}
\end{theorem}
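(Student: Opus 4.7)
The plan is to set up coordinates on $\Sigma^2$ adapted to the principal frame $\{X_1, X_2\}$, split $\Phi$ according to the product structure $\mathbb{R}^4 = \mathbb{R}^3 \times \mathbb{R}$, and solve component ODEs to recover the parametrization \eqref{eq:local_sxr}; the equation \eqref{eq:bicons_sxr} will then encode the remaining biconservative content not yet used.

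First I would introduce local coordinates $(u,v)$. Since $\nabla_{X_1}X_1 = 0$ (Lemma \ref{lemma1}), take $u$ to be arclength along an integral curve of $X_1$, so $\partial_u = X_1$. Lemma \ref{lemma1} gives $[X_1, X_2] = -\lambda_2\tan\theta\, X_2 = -a\sin\theta\, X_2$, and combined with $a' = -(a^2+1)\sin\theta$ from Proposition \ref{prop1} this shows $\partial_v := X_2/\sqrt{a^2+1}$ commutes with $\partial_u$. In these coordinates the induced metric is $du^2 + (a^2+1)^{-1}\, dv^2$ and $a, \theta, f$ depend only on $u$. Writing $\Phi = (\Phi_1, z)$ and using $\xi = (0,0,0,1) = \cos\theta X_1 + \sin\theta\, \eta$ gives $\partial_u z = \cos\theta$, $\partial_v z = 0$, hence $z(u) = \int_{u_0}^u \cos\theta(t)\, dt$.

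The core step is to describe $\Phi_1 : U \to \mathbb{S}^2 \subset \mathbb{R}^3$. I would introduce the auxiliary unit vector $\zeta := -\sin\theta X_1 + \cos\theta\, \eta$, orthogonal to $\xi$ and hence tangent to $\mathbb{S}^2$. Lemma \ref{lemma1} yields $\bar\nabla_{X_2} X_2 = a\zeta$ and $\bar\nabla_{X_2} \zeta = -a X_2$; combining these with the Gauss formulas for $\mathbb{S}^2 \hookrightarrow \mathbb{R}^3$ and for $\mathbb{S}^2 \times \mathbb{R} \hookrightarrow \mathbb{R}^4$ produces the $\mathbb{R}^3$-identities
\begin{equation*}
\partial_v \Phi_1 = \frac{X_2}{\sqrt{a^2+1}}, \qquad \partial_v \zeta = -a\, \partial_v \Phi_1, \qquad \partial_v^2 \Phi_1 = \frac{a\zeta - \Phi_1}{a^2+1}.
\end{equation*}
The middle identity makes $D(u) := \zeta + a\Phi_1$ independent of $v$, whence $\partial_v^2 \Phi_1 + \Phi_1 = a D(u)/(a^2+1)$. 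Solving this linear ODE in $v$ gives $\Phi_1 = aD/(a^2+1) + \alpha(u)\cos v + \beta(u)\sin v$; differentiating in $u$ and comparing with $\partial_u \Phi_1 = -\sin\theta\, \zeta$ (the $T\mathbb{S}^2$-part of $X_1$), together with $a' = -(a^2+1)\sin\theta$, yields scalar ODEs integrating to $D(u) = C_0\sqrt{a^2+1}$, $\alpha(u) = C_1/\sqrt{a^2+1}$, $\beta(u) = C_2/\sqrt{a^2+1}$ for constant $\mathbb{R}^3$-vectors. Imposing $|\Phi_1|^2 \equiv 1$ expands into a polynomial identity in $a$ and a trigonometric identity in $v$; since $a$ is non-constant (the non-CMC hypothesis), matching coefficients forces $\{C_0, C_1, C_2\}$ to be orthonormal, and an orientation choice yields $C_0 = C_1 \times C_2$, giving \eqref{eq:local_sxr}.

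Finally, one has to verify \eqref{eq:bicons_sxr}. The relations $\lambda_1 = -\theta'$, $\lambda_2 = a\cos\theta$, $2f = a\cos\theta - \theta'$, and $a' + (a^2+1)\sin\theta = 0$ have been used already, but the biconservative equation \eqref{eq:l1} still carries content: it rewrites as $f'(\theta'-f) = f\sin\theta\cos\theta$. Expanding \eqref{eq:bicons_sxr} via $\theta' - a\cos\theta = -2f$ and $(\theta')^2 - a^2\cos^2\theta = (\theta'-a\cos\theta)(\theta'+a\cos\theta)$, then eliminating $\theta''$ through the derivative of $2f = a\cos\theta - \theta'$, should reduce it precisely to that identity. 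The main obstacles I expect are this final algebraic reduction and careful handling of the orthonormality step; the coordinate construction and the linear ODE in $v$ are structural.
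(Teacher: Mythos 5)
Your proposal is correct: every identity you assert checks out (in particular $\bar\nabla_{X_2}X_2=a\zeta$, $\bar\nabla_{X_2}\zeta=-aX_2$, the $v$-independence of $D=\zeta+a\Phi_1$, the first-order ODEs $D'=-a\sin\theta\,D$ and $P'=a\sin\theta\,P$ for $P=\Phi_1-aD/(a^2+1)$, and the reduction of \eqref{eq:bicons_sxr} to $f'(\theta'-f)=f\sin\theta\cos\theta$, which via $\theta'=a\cos\theta-2f$ is exactly \eqref{eq:l2}). The route, however, differs from the paper's in an interesting way. The paper derives the three component PDEs \eqref{eq:phiuu}, \eqref{eq:phiuv}, \eqref{eq:phivv}, integrates the mixed equation \eqref{eq:phiuv} to separate variables, pins down the $u$-part by integrating the second-order equation \eqref{eq:phiuu} (via the first integral $h_i^2+\omega_i^2=c_{0i}$), and only then recovers the circular $v$-dependence from the algebraic constraints $|\varphi|=1$, $\langle\varphi_v,\varphi_v\rangle=1$ together with the constancy of $\varphi\times\varphi_v$; it explicitly remarks that \eqref{eq:phivv} is \emph{not} used and only verified a posteriori. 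You do the opposite: your oscillator equation $\partial_v^2\Phi_1+\Phi_1=aD/(a^2+1)$ is precisely \eqref{eq:phivv} in disguise, obtained directly from $\widetilde\nabla_{X_2}X_2$, and the $u$-dependence is then captured by first-order ODEs for the frame-adapted vectors $D$, $\alpha$, $\beta$ rather than by the paper's second-order equation. Your version buys a more transparent origin for the orthonormal triple and the cross-product term ($|C_0|=1$ falls out of $|D|^2=a^2+1$, and orthogonality from matching powers of the non-constant function $a$ in $|\Phi_1|^2\equiv1$); the paper's version avoids introducing the auxiliary field $\zeta$ and keeps everything at the level of scalar component ODEs. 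The only place where you should be a bit more careful in a write-up is the coefficient-matching step: you need $1,a,a^2$ to be linearly independent as functions of $u$, which follows since $a'=-(a^2+1)\sin\theta\neq0$ makes $a$ strictly monotone, not merely non-constant.
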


\begin{proof} Let $\Sigma^2$ be a non-CMC biconservative surface in $\mathbb{S}^2\times\mathbb{R}$. With the same notations as in the previous section, from Lemma \ref{lemma1} and Proposition \ref{prop1}, we have $[X_1,X_2]=-\nabla_{X_2}X_1=-\lambda_2\tan\theta X_2=-a\sin\theta X_2$. This shows that we can choose local coordinates $\{u,v\}$ on the surface such that $\partial/\partial u=X_1$ and $\partial/\partial v=\beta(u,v)X_2$, where $\beta$ is a real valued function given by
\begin{equation}\label{eq:beta}
\frac{\partial\beta}{\partial u}=a\sin\theta\beta.
\end{equation}
Since $X_2(a)=X_2(\theta)=0$, it follows that $a=a(u)$ and $\theta=\theta(u)$. Moreover, again from Lemma \ref{lemma1} and Proposition \ref{prop1}, we have $\cos\theta\neq 0$ and $\sin\theta\neq 0$ at all points, $a'(u)=X_1(a)=-(a^2+1)\sin\theta$, and, as it can be easily verified, the biconservative equation \eqref{eq:l1} is just equation \eqref{eq:bicons_sxr}.

Next, from equation \eqref{eq:beta} one obtains $\beta(u,v)=\tau(v)/\sqrt{a^2+1}$, where $\tau=\tau(v)$ is a real valued function.

In the following we will derive the local expression of $\Phi(u,v)$ in terms of the functions $a$ and $\theta$.

The local equation of the surface $\Sigma^2$ in $\mathbb{R}^3\times\mathbb{R}=\mathbb{R}^4$ is 
$$
\Phi=\Phi(u,v)=(\Phi_1(u,v),\Phi_2(u,v),\Phi_3(u,v),\Phi_4(u,v)),
$$
with 
$$
\frac{\partial}{\partial u}=\Phi_u\quad\textnormal{and}\quad\frac{\partial}{\partial v}=\Phi_v.
$$

First, we have $(\Phi_4)_u=\langle\Phi_u,\xi\rangle=\cos\theta$ and $(\Phi_4)_v=\langle\Phi_v,\xi\rangle=0$. It follows that
\begin{equation}\label{eq:phi4.1}
\Phi_4(u,v)=\Phi_4(u)=\int_{u_0}^u\cos\theta(t)dt.
\end{equation}

Now, let us denote by $\widetilde\xi$ a unit vector field in $\mathbb{R}^4$ normal to $\mathbb{S}^2\times\mathbb{R}$. Along $\Sigma^2$ this vector field is given by $\widetilde\xi=(\Phi_1,\Phi_2,\Phi_3,0)$. The unit vector field $\eta$, normal to the surface in $\mathbb{S}^2\times\mathbb{R}$, viewed as a vector field in $\mathbb{R}^4$, is given by $\eta=(\eta_1,\eta_2,\eta_3,\sin\theta)$. 

Then, in $\mathbb{R}^4$, for any vector field $X$ tangent to $\Sigma^2$, we have, from the Weingarten equation of the surface in $\mathbb{R}^4$,
$$
\widetilde\nabla_X\widetilde\xi=-\widetilde A_{\widetilde\xi}X+\widetilde\nabla^{\perp}_X\widetilde\xi
$$
and 
$$
\widetilde\nabla^{\perp}_X\widetilde\xi=\langle\widetilde\nabla_X\widetilde\xi,\eta\rangle\eta=-\sin\theta\langle X,\xi\rangle\eta.
$$ 
It follows that 
\begin{equation}\label{eq:atilde}
\left\langle\widetilde A_{\widetilde\xi}\frac{\partial}{\partial u},\frac{\partial}{\partial u}\right\rangle=-\sin^2\theta,\quad \left\langle\widetilde A_{\widetilde\xi}\frac{\partial}{\partial v},\frac{\partial}{\partial v}\right\rangle=-\beta^2,\quad \left\langle\widetilde A_{\widetilde\xi}\frac{\partial}{\partial u},\frac{\partial}{\partial v}\right\rangle=0.
\end{equation}

Next, the unit vector field $\xi$ tangent to $\mathbb{R}$ in $\mathbb{S}^2\times\mathbb{R}$, viewed as vector field in $\mathbb{R}^4$ has the first three coordinates equal to $0$ and then, since
$$
\xi=\cos\theta\Phi_u+\sin\theta\eta,
$$
one obtains the first three coordinates of $\eta$ as
\begin{equation}\label{eq:etai}
\eta_i=-\cot\theta(\Phi_i)_u,\quad i\in\{1,2,3\}.
\end{equation}

Now we can compute the second order derivatives of $\Phi$. 

First, from Lemma \ref{lemma1} and \eqref{eq:atilde}, we have
\begin{eqnarray*}
\Phi_{uu}&=&\widetilde\nabla_{\frac{\partial}{\partial u}}\frac{\partial}{\partial u}=\bar\nabla_{\frac{\partial}{\partial u}}\frac{\partial}{\partial u}+\widetilde\sigma\left(\frac{\partial}{\partial u},\frac{\partial}{\partial u}\right)\\ &=&-\theta_u\eta-\sin^2\theta\widetilde\xi,
\end{eqnarray*}
which, together with \eqref{eq:etai}, gives
\begin{equation}\label{eq:phiuu}
(\Phi_i)_{uu}=(\theta_u\cot\theta)(\Phi_i)_u-\sin^2\theta\Phi_i,\quad i\in\{1,2,3\}.
\end{equation}

Next, we use again \eqref{eq:atilde}, together with \eqref{eq:beta}, to obtain 
\begin{eqnarray*}
\Phi_{uv}&=&\widetilde\nabla_{\frac{\partial}{\partial u}}\frac{\partial}{\partial v}=\nabla_{\frac{\partial}{\partial u}}\frac{\partial}{\partial v}=\nabla_{X_1}(\beta X_2)\\&=&-\frac{aa_u}{a^2+1}\frac{\partial}{\partial v},
\end{eqnarray*}
which is just
\begin{equation}\label{eq:phiuv}
(\Phi_i)_{uv}=-\frac{aa_u}{a^2+1}(\Phi_i)_v,\quad i\in\{1,2,3\}.
\end{equation}

In the same way, one shows that
$$
\Phi_{vv}=\widetilde\nabla_{\frac{\partial}{\partial v}}\frac{\partial}{\partial v}=\nabla_{\frac{\partial}{\partial v}}\frac{\partial}{\partial v}+a\beta^2\cos\theta\eta-\beta^2\widetilde\xi,
$$
and then
\begin{equation}\label{eq:phivv}
(\Phi_i)_{vv}=\frac{\beta_v}{\beta}(\Phi_i)_v-\frac{a\beta^2}{\sin\theta}(\Phi_i)_u-\beta^2\Phi_i,\quad i\in\{1,2,3\}.
\end{equation}

It is straightforward to solve equation \eqref{eq:phiuv}. We integrate first with respect to $u$ and then to $v$ to obtain 
$$
\Phi_i(u,v)=\frac{\delta_i(v)}{\sqrt{a^2+1}}+\omega_i(u),\quad i\in\{1,2,3\},
$$
where $\delta_i$ and $\omega_i$ are smooth real valued functions. Replacing in equation \eqref{eq:phiuu}, we have 
$$
(\omega_i)_{uu}-\theta_u\cot\theta(\omega_i)_u+\sin^2\theta\omega_i=0,\quad i\in\{1,2,3\}.
$$
Since $\sin\theta\neq 0$ everywhere, we can consider $h_i=(\omega_i)_u/\sin\theta$ and then the above equations become
$$
(h_i^2)_u+(\omega_i^2)_u=0,
$$
i.e., $h_i^2+\omega_i^2=c_{0i}$, where $c_{0i}$, $i\in\{1,2,3\}$, are non-negative real constants. Therefore, we see that
$$
((\omega_i)_u)^2+\frac{(a_u)^2}{(a^2+1)^2}\omega_i^2-c_{0i}\frac{(a_u)^2}{(a^2+1)^2}=0,
$$
with their respective solutions
$$
\omega_i=\frac{c_ia+d_i}{\sqrt{a^2+1}},\quad i\in\{1,2,3\},
$$
where $c_i$ and $d_i$ are real constants. 

Finally, we denote $\varphi_i(v)=\delta_i(v)+d_i$ and then
$$
\Phi_i(u,v)=\frac{\varphi_i(v)+c_ia(u)}{\sqrt{a^2(u)+1}},\quad i=\{1,2,3\}.
$$
Hence, also using \eqref{eq:phi4.1}, we have
\begin{equation}\label{eq:phiinter}
\Phi(u,v)=\left(\frac{1}{\sqrt{a^2(u)+1}}(\varphi(v)+Ca(u)),\int_{u_0}^u\cos\theta(t)dt\right),
\end{equation}
where $\varphi(v)=(\varphi_1(v),\varphi_2(v),\varphi_3(v))$ and $C=(c_1,c_2,c_3)=\cst$ are vectors in $\mathbb{R}^3$. 

The following identities hold in $\mathbb{R}^4$ along the surface
$$
\langle\widetilde\xi,\widetilde\xi\rangle=1,\quad\langle\Phi_u,\Phi_u\rangle=1,\quad\langle\Phi_v,\Phi_v\rangle=\beta^2,\quad\langle\Phi_u,\Phi_v\rangle=0,
$$
and it is straightforward to see that they are equivalent to the following identities in $\mathbb{R}^3$
$$
\langle\varphi,\varphi\rangle=1,\quad\langle C,C\rangle=1,\quad\langle\varphi,C\rangle=0,\quad \langle\varphi_v,\varphi_v\rangle=\tau^2(v).
$$
After changing coordinates, we have $\langle\varphi_v,\varphi_v\rangle=1$. 

Since $C$ is a constant vector, it follows that also $\varphi_v$ is orthogonal to $C$ and then $C$ is collinear with $\varphi\times\varphi_v$. Since $|C|=1$ and $|\varphi\times\varphi_v|=1$, one sees that $\varphi\times\varphi_v=\pm C=\cst$. This implies that $\varphi$ and $\varphi_{vv}$ are collinear. On the other hand, from $\langle\varphi,\varphi_v\rangle=0$ and $\langle\varphi_v,\varphi_v\rangle=1$, one obtains $\langle\varphi,\varphi_{vv}\rangle=-1$. In conclusion, we get that
$$
\varphi_{vv}+\varphi=0,
$$
and then $\varphi(v)=\cos v C_1+\sin v C_2$, where $C_1$ and $C_2$ are two constant vectors in $\mathbb{R}^3$. From $\langle\varphi,\varphi\rangle=\langle\varphi_v,\varphi_v\rangle=1$, one sees that $C_1$ and $C_2$ are orthogonal to each other and of unit length. Replacing in \eqref{eq:phiinter}, we obtain \eqref{eq:local_sxr} and conclude.
\end{proof}

\begin{remark} An integral curve $\gamma_0=\gamma_0(u)$ of $X_1=\partial/\partial u$ is given by
$$
\gamma_0(u)=\left(\frac{\cos v_0}{\sqrt{a^2+1}}C_1+\frac{\sin v_0}{\sqrt{a^2+1}}C_2+\frac{a}{\sqrt{a^2+1}}(C_1\times C_2),\int_{u_0}^u\cos\theta(t)dt\right),
$$
for some fixed $v_0$. After a change of coordinates, $\gamma_0$ can be written as
$$
\gamma_0(u)=\left(\frac{1}{\sqrt{a^2+1}},0,\frac{a}{\sqrt{a^2+1}},\int_{u_0}^u\cos\theta(t)dt\right),
$$
which shows that the curve lies on the cylinder $\mathbb{S}^1\times\mathbb{R}$. Moreover, since $\gamma_0$ is a geodesic of $\Sigma^2$, we have $\bar\nabla_{\gamma_0'}\gamma'_0=\kappa N$, where $\kappa=|\theta'|$ is its curvature and $N=\mp\eta$, as $\theta'$ is positive or negative, and then $\bar\nabla_{\gamma_0'}N=\pm\bar\nabla_{\gamma_0'}\eta=\mp\theta'\gamma_0'=-\kappa\gamma_0'$, which shows that $\gamma_0$ is a plane curve. Hence, an integral curve of $X_1=\partial/\partial u$ is an (open part of an) ellipse on the cylinder $\mathbb{S}^1\times\mathbb{R}$.
\end{remark}

\begin{remark} We note that equations \eqref{eq:phivv} are also satisfied by surfaces given by Theorem \ref{thm:local_sxr}.
\end{remark}

\subsection{Biconservative surfaces in $\mathbb{H}^2\times\mathbb{R}$} Let $\mathbb{L}^3$ be the three-dimensional Lorentz-Minkowski space, i.e., the real vector space $\mathbb{R}^3$ with the Lorentzian metric tensor $\langle,\rangle_{\mathbb{L}^3}$ given by
$$
\langle,\rangle_{\mathbb{L}^3}=dx_1^2+dx_2^2-dx_3^2,
$$
where $(x_1,x_2,x_3)$ are the canonical coordinates of $\mathbb{R}^3$. We will use the following model of the hyperbolic plane $\mathbb{H}^2$, viewed as a quadric in $\mathbb{L}^3$,
$$
\mathbb{H}^2=\{(x_1,x_2,x_3)\in\mathbb{L}^3:x_1^2+x_2^2-x_3^2=-1,\quad x_3>0\},
$$
with the induced metric, which is a Riemannian one with constant sectional curvature equal to $-1$. 

We also recall the definition of the Lorentzian cross-product $\otimes:\mathbb{R}^3\times\mathbb{R}^3\rightarrow\mathbb{R}^3$ (see, for example, \cite{R} for more details)
$$
(x_1,x_2,x_3)\otimes(y_1,y_2,y_3)=(x_2y_3-x_3y_2,x_3y_1-x_1y_3,x_2y_1-x_1y_2),
$$
with the following properties, similar to those of the vector cross product in Euclidean spaces,
$$
\langle X\otimes Y,X\rangle_{\mathbb{L}^3}=\langle X\otimes Y,Y\rangle_{\mathbb{L}^3}=0,\quad
X\otimes Y=-Y\otimes X,
$$
and
$$
\langle X\otimes Y,X\otimes Y\rangle_{\mathbb{L}^3}=-\langle X,X\rangle_{\mathbb{L}^3}\langle Y,Y\rangle_{\mathbb{L}^3}+\langle X,Y\rangle_{\mathbb{L}^3}^2,
$$
where $X$ and $Y$ are vectors in $\mathbb{R}^3$.

The following result gives the local equation of a non-CMC biconservative surface with $\nabla f$ a principal direction in $\mathbb{H}^2\times\mathbb{R}$. We will see that we have three possible cases, according to the values of the function $a$ introduced in Proposition \ref{prop1}. Thus, either $a^2=1$ throughout the open set $U$, or, possibly restricting to a smaller open subset $W\subset U$, $a^2\neq 1$ at each point. In the later situation there actually are two different cases, as $a^2>1$ or $a^2<1$.

\begin{theorem}\label{thm:local_hxr} Let $\Sigma^2$ be a biconservative surface in $\mathbb{H}^2\times\mathbb{R}$ with $f>0$ and $\nabla f\neq 0$ at each point on the surface, such that $\nabla f$ is a principal direction. Then $\Sigma^2\subset\mathbb{L}^3\times\mathbb{R}$ can be locally parametrized as follows
\begin{enumerate}

\item[(i)] if $a=\pm 1$
\begin{equation*}\label{eq:local_hxr1}
\Phi(u,v)=\left(e^{\pm\int_{u_0}^u\sin\theta(t)dt}\left(C_0+C_1v+C_2v^2\right)+C_2e^{\mp\int_{u_0}^u\sin\theta(t)dt},\int_{u_0}^u\cos\theta(t)dt\right),
\end{equation*}
where $C_0,C_1,C_2\in\mathbb{R}^3$ are constant vectors such that
$$
\langle C_0,C_0\rangle_{\mathbb{L}^3}=\langle C_2,C_2\rangle_{\mathbb{L}^3}=0,\quad \langle C_1,C_1\rangle_{\mathbb{L}^3}=1,
$$
$$
\langle C_0,C_1\rangle_{\mathbb{L}^3}=\langle C_1,C_2\rangle_{\mathbb{L}^3}=0,\quad \langle C_0,C_2\rangle_{\mathbb{L}^3}=-\frac{1}{2},
$$
and $\theta=\theta(u)$ is a real valued function satisfying 
\begin{equation}\label{eq:theta1}
\ln(\cos^2\theta)+\ln\left(\sqrt{g^2(6g^2\mp 5g+2)}\right)\mp\frac{5\sqrt{23}}{23}\arctan\left(\frac{12g-5}{\sqrt{23}}\right)+c=0,
\end{equation}
with $\cos\theta\neq 0$ and $\sin\theta\neq 0$ at each point on the surface, where $g=g(u)$ is a function given by $\theta'=(\pm 1-2g)\cos\theta$ and $g\cos\theta>0$ everywhere, and $c\in\mathbb{R}$ is a constant.

\item[(ii)] 

\begin{enumerate}

\item if $a^2>1$
\begin{equation*}\label{eq:local_hxr2}
\Phi(u,v)=\left(\frac{\cos v}{\sqrt{a^2-1}}C_1+\frac{\sin v}{\sqrt{a^2-1}}C_2+\frac{a}{\sqrt{a^2-1}}(C_1\otimes C_2),\int_{u_0}^u\cos\theta(t)dt\right),
\end{equation*}
where $C_1,C_2\in\mathbb{R}^3$ are two constant vectors such that
$$
\langle C_1,C_1\rangle_{\mathbb{L}^3}=\langle C_2,C_2\rangle_{\mathbb{L}^3}=1,\quad \langle C_1,C_2\rangle_{\mathbb{L}^3}=0, 
$$

\item if $a^2<1$
\begin{equation*}\label{eq:local_hxr3}
\Phi(u,v)=\left(\frac{e^v}{\sqrt{1-a^2}}C_1+\frac{e^{-v}}{\sqrt{1-a^2}}C_2+\frac{2a}{\sqrt{1-a^2}}(C_1\otimes C_2),\int_{u_0}^u\cos\theta(t)dt\right),
\end{equation*}
where $C_1,C_2\in\mathbb{R}^3$ are two constant vectors such that
$$
\langle C_1,C_1\rangle_{\mathbb{L}^3}=\langle C_2,C_2\rangle_{\mathbb{L}^3}=0,\quad \langle C_1,C_2\rangle_{\mathbb{L}^3}=-\frac{1}{2},
$$
\end{enumerate}
\noindent with functions $a=a(u)$ and $\theta=\theta(u)$ satisfying 
$$
\sin\theta\neq 0,\quad \cos\theta\neq 0,\quad a'+(a^2-1)\sin\theta=0,
$$ 
and
\begin{equation}\label{eq:bicons_hxr3}
\left(\theta'-a\cos\theta\right)\left(\theta'-a\cos\theta\right)'+(a^2-1)\left(\frac{(\theta')^2-a^2\cos^2\theta}{a^2-1}\right)'=0.
\end{equation}

\end{enumerate}
\end{theorem}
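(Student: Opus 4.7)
The plan is to mimic the proof of Theorem \ref{thm:local_sxr}, now in the flat Lorentzian ambient $\mathbb{L}^3\times\mathbb{R}$ with $\widetilde\xi=(\Phi_1,\Phi_2,\Phi_3,0)$ playing the role of the (unit timelike) normal to $\mathbb{H}^2\times\mathbb{R}$, i.e., $\langle\widetilde\xi,\widetilde\xi\rangle_{\mathbb{L}^3}=-1$. First I would introduce coordinates $\{u,v\}$ with $\partial/\partial u=X_1$ and $\partial/\partial v=\beta X_2$, where $\partial\beta/\partial u=a\sin\theta\,\beta$; the computation for the fourth coordinate transplants verbatim and gives $\Phi_4(u)=\int_{u_0}^u\cos\theta(t)\,dt$, while the relation $\xi=\cos\theta\,\Phi_u+\sin\theta\,\eta$ forces $\eta_i=-\cot\theta\,(\Phi_i)_u$ for $i\in\{1,2,3\}$. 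By Proposition \ref{prop1} with $c=-1$ we have $a'+(a^2-1)\sin\theta=0$, so the sign of $a^2-1$ is preserved on a neighborhood. This produces the trichotomy: either $a\equiv\pm 1$, or $a^2>1$, or $a^2<1$ on an open subset. The biconservative equation transcribes to \eqref{eq:bicons_hxr3} in cases (ii) and to a different relation in case (i).

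In the regular cases $a^2\neq 1$, the scheme of the $\mathbb{S}^2\times\mathbb{R}$ proof transplants directly, with $a^2+1$ replaced by $|a^2-1|$. Integrating $\Phi_{uv}$ and $\Phi_{uu}$ yields $\omega_i(u)=(c_i a+d_i)/\sqrt{|a^2-1|}$, so that a vector-valued function $\varphi(v)\in\mathbb{L}^3$ emerges together with a constant vector $C$. In case (ii)(a) ($a^2>1$), the induced metric identities give $\langle\varphi,\varphi\rangle_{\mathbb{L}^3}=1$ and, after a change of $v$-coordinate, $\langle\varphi_v,\varphi_v\rangle_{\mathbb{L}^3}=1$; then $\varphi$ and $\varphi_{vv}$ are collinear with $\varphi_{vv}+\varphi=0$, yielding $\varphi(v)=\cos v\,C_1+\sin v\,C_2$, and the constant vector is identified as $\varphi\otimes\varphi_v=\pm C_1\otimes C_2$, timelike with $\langle C_1\otimes C_2,C_1\otimes C_2\rangle_{\mathbb{L}^3}=-1$. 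In case (ii)(b) ($a^2<1$), $\varphi$ is timelike with $\langle\varphi,\varphi\rangle_{\mathbb{L}^3}=-1$, $\langle\varphi_v,\varphi_v\rangle_{\mathbb{L}^3}=1$, so $\varphi_{vv}-\varphi=0$, which forces $\varphi=e^v C_1+e^{-v}C_2$ with $C_1,C_2$ null and $\langle C_1,C_2\rangle_{\mathbb{L}^3}=-1/2$. The Lorentzian cross product $\otimes$ replaces $\times$ at every step.

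The main obstacle is case (i), where $a=\pm 1$ is locally constant and the previous ansatz degenerates. Here $\partial\beta/\partial u=\pm\sin\theta\,\beta$ integrates to $\beta(u,v)=\tau(v)e^{\pm\int_{u_0}^u\sin\theta(t)\,dt}$, and a change of $v$-coordinate normalizes $\tau\equiv 1$. The equation for $\Phi_{uv}$ now reads $(\Phi_i)_{uv}=\pm\sin\theta\,(\Phi_i)_v$, so $(\Phi_i)_v=e^{\pm\int\sin\theta\,dt}\mu_i(v)$; substituting into the transformed $\Phi_{uu}$ equation and separating the $u$- and $v$-dependence forces a polynomial-in-$v$ piece $C_0+C_1 v+C_2 v^2$ together with a reciprocal-exponential companion $C_2 e^{\mp\int\sin\theta\,dt}$ arising as a particular solution. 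The bilinear constraints $\langle\Phi_u,\Phi_u\rangle_{\mathbb{L}^3\times\mathbb{R}}=1$, $\langle\Phi_v,\Phi_v\rangle_{\mathbb{L}^3\times\mathbb{R}}=\beta^2$, and $\langle\widetilde\xi,\widetilde\xi\rangle_{\mathbb{L}^3}=-1$, matched as polynomials in $v$ and in powers of $e^{\pm\int\sin\theta\,dt}$, then fix the stated Lorentzian relations on $C_0,C_1,C_2$. Finally, to obtain \eqref{eq:theta1}, I would substitute $\theta'=(\pm 1-2g)\cos\theta$ into the specialization of the biconservative equation \eqref{eq:l1} at $a=\pm 1$; the resulting ODE becomes separable in $g$, and integration via partial-fractions decomposition of $1/[g(6g^2\mp 5g+2)]$, combined with the discriminant $25-48=-23$ of the quadratic factor, produces exactly the logarithmic term $\ln\sqrt{g^2(6g^2\mp 5g+2)}$ together with the $\arctan((12g-5)/\sqrt{23})$ term in \eqref{eq:theta1}.
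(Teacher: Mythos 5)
Your treatment of case (ii) is sound and in fact takes a slightly different route from the paper: to pin down $\varphi(v)$ you adapt the Lorentzian cross-product argument from the $\mathbb{S}^2\times\mathbb{R}$ proof (constancy of $\varphi\otimes\varphi_v$ forces $\varphi_{vv}=\mp\varphi$), whereas the paper instead computes along an integral curve $\gamma$ of $X_2$ that $\gamma'''+(a^2-1)\gamma'=0$ and reads off the trigonometric/hyperbolic form of $\varphi$ from that third-order ODE. Both arguments are correct and essentially interchangeable; the paper's version has the advantage of treating the three subcases (including $a=\pm1$, where $\gamma'''=0$) uniformly. Your outline of the integration leading to \eqref{eq:theta1} also matches the paper's computation, which starts from \eqref{eq:l2} with $f=g\cos\theta$ (equivalent to your use of \eqref{eq:l1}, since $\lambda_1+\lambda_2=2f$).

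The genuine gap is in case (i), in the step where you claim that substituting $(\Phi_i)_v=e^{\pm\int\sin\theta}\mu_i(v)$ into the $\Phi_{uu}$ equation and ``separating the $u$- and $v$-dependence'' forces the polynomial $C_0+C_1v+C_2v^2$. It does not. Writing $\Phi_i=e^{\pm\int\sin\theta}\varphi_i(v)+\omega_i(u)$, the function $e^{\pm\int\sin\theta}$ is itself a solution of the homogeneous ODE $y''-(\theta_u\cot\theta)y'-\sin^2\theta\,y=0$, so the $\varphi_i(v)$ terms cancel identically in \eqref{eq:phiuu2} and that equation only constrains $\omega_i$, yielding $\omega_i=A_ie^{\pm\int\sin\theta}+B_ie^{\mp\int\sin\theta}$; this is where the ``companion'' term $Ce^{\mp\int\sin\theta}$ actually comes from (it is the second homogeneous solution in $u$, not a particular solution tied to the $v$-polynomial). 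After absorbing $A_i$ into $\varphi_i$, the subsystem \eqref{eq:phiuu2}--\eqref{eq:phiuv2} leaves $\varphi(v)$ \emph{completely arbitrary}, so an additional input is required to get $\varphi(v)=C_0+C_1v+C_2v^2$. The paper supplies it by showing $\widetilde\nabla_{X_2}\widetilde\nabla_{X_2}X_2=0$ along arc-length integral curves of $X_2$ (equivalently, by using the $\Phi_{vv}$ information \eqref{eq:phivv2}, which your argument never invokes). Alternatively, one can close the gap algebraically: the constraints \eqref{eq:vector1} say $\varphi$ is a unit-speed curve on the intersection of the light cone with the null affine plane $\{\langle x,C\rangle_{\mathbb{L}^3}=-1/2\}$, which is a parabola, and unit-speed forces the affine parameter to be $v$ up to a constant. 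As written, however, your derivation of the quadratic form is unsupported, and the identification of the constant vector $C$ with $C_2$ (which in the paper follows from the inner-product relations, since two mutually orthogonal null vectors in a Lorentzian $2$-plane are proportional) is also missing.
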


\begin{proof} In all three cases of the theorem, the first part of the proof is similar to that of Theorem \ref{thm:local_sxr} and that is why we will only point out the main steps and formulas that lead to the local equations. 

First, since by Lemma \ref{lemma1} and Proposition \ref{prop1} we have $[X_1,X_2]=-\nabla_{X_2}X_1=-\lambda_2\tan\theta X_2=-a\sin\theta X_2$, it follows that we can choose local coordinates $\{u,v\}$ on the surface such that $\partial/\partial u=X_1$ and $\partial/\partial v=\beta(u,v)X_2$, where $\beta$ is a real valued function given by
\begin{equation}\label{eq:beta_hxr}
\frac{\partial\beta}{\partial u}=a\sin\theta\beta,
\end{equation}
where $a=a(u)$ and $\theta=\theta(u)$, since $X_2(a)=X_2(\theta)=0$. 

The local equation of the surface $\Sigma^2$ in $\mathbb{L}^3\times\mathbb{R}$ is 
$$
\Phi=\Phi(u,v)=(\Phi_1(u,v),\Phi_2(u,v),\Phi_3(u,v),\Phi_4(u,v)),
$$
with
\begin{equation}\label{eq:phi4.2}
\Phi_4(u,v)=\Phi_4(u)=\int_{u_0}^u\cos\theta(t)dt.
\end{equation}

Let $\widetilde\xi$ be a unit vector field in $\mathbb{L}^3\times\mathbb{R}$ normal to $\mathbb{H}^2\times\mathbb{R}$. Along $\Sigma^2$ this vector field is given by $\widetilde\xi=(\Phi_1,\Phi_2,\Phi_3,0)$ and $\langle\widetilde\xi,\widetilde\xi\rangle=-1$. Then, we have
\begin{equation}\label{eq:atilde2}
\left\langle\widetilde A_{\widetilde\xi}\frac{\partial}{\partial u},\frac{\partial}{\partial u}\right\rangle=-\sin^2\theta,\quad \left\langle\widetilde A_{\widetilde\xi}\frac{\partial}{\partial v},\frac{\partial}{\partial v}\right\rangle=-\beta^2,\quad \left\langle\widetilde A_{\widetilde\xi}\frac{\partial}{\partial u},\frac{\partial}{\partial v}\right\rangle=0.
\end{equation}
Considering all these, in the same way as for surfaces in $\mathbb{S}^2\times\mathbb{R}$, one obtains the following equations
\begin{equation}\label{eq:phiuu2}
(\Phi_i)_{uu}=(\theta_u\cot\theta)(\Phi_i)_u+\sin^2\theta\Phi_i,\quad i\in\{1,2,3\},
\end{equation}
\begin{equation}\label{eq:phiuv2}
(\Phi_i)_{uv}=a\sin\theta(\Phi_i)_v,\quad i\in\{1,2,3\},
\end{equation}
and
\begin{equation}\label{eq:phivv2}
(\Phi_i)_{vv}=\frac{\beta_v}{\beta}(\Phi_i)_v-\frac{a\beta^2}{\sin\theta}(\Phi_i)_u+\beta^2\Phi_i,\quad i\in\{1,2,3\}.
\end{equation}

Let us now split our study according to the possible values of the function $a$. 

If $a=\pm 1$ throughout the surface, from \eqref{eq:beta_hxr}, one obtains $\beta=\beta(u,v)=\tau(v)e^{\pm\int_{u_0}^u\sin\theta(t)dt}$ and, solving equations \eqref{eq:phiuv2} and \eqref{eq:phiuu2}, we have that
$$
\Phi(u,v)=\left(\varphi(v)e^{\pm\int_{u_0}^u\sin\theta(t)dt}+Ce^{\mp\int_{u_0}^u\sin\theta(t)dt},\int_{u_0}^u\cos\theta(t)dt\right),
$$
where $\varphi(v)=(\varphi_1(v),\varphi_2(v),\varphi_3(v))$ and $C=(c_1,c_2,c_3)=\cst$ are vectors in $\mathbb{R}^3$, such that
$$
\Phi_1^2+\Phi_2^2-\Phi_3^2=-1,\quad\langle\Phi_u,\Phi_u\rangle=1,\quad\langle\Phi_u,\Phi_v\rangle=0,\quad\langle\Phi_v,\Phi_v\rangle=\beta^2,
$$ 
or, equivalently,
\begin{equation}\label{eq:vector1}
\langle\varphi(v),\varphi(v)\rangle_{\mathbb{L}^3}=\langle C,C\rangle_{\mathbb{L}^3}=0,\quad\langle \varphi(v),C\rangle_{\mathbb{L}^3}=-\frac{1}{2},\quad\langle \varphi'(v),\varphi'(v)\rangle_{\mathbb{L}^3}=\tau^2(v).
\end{equation}
We change the coordinates such that the last identity becomes $\langle \varphi'(v),\varphi'(v)\rangle_{\mathbb{L}^3}=1$. 

Next, for a fixed $u_1$, consider an integral curve $\gamma=\gamma(s)=\gamma(u_1,s)$ of $X_2$, parametrized by arc length. From Lemma~\ref{lemma1}, Proposition \ref{prop1}, and equation \eqref{eq:atilde2}, we have
\begin{eqnarray*}
\gamma''&=&\widetilde\nabla_{X_2}X_2=\bar\nabla_{X_2}X_2+\widetilde\sigma(X_2,X_2)\\
&=&\mp\sin\theta X_1\pm\cos\theta\eta+\widetilde\xi,
\end{eqnarray*}
and then
$$
\gamma'''=\widetilde\nabla_{X_2}\widetilde\nabla_{X_2}X_2=0.
$$
Then, $\gamma(s)=\gamma(u_1,s)=\left(C_0+C_1s+C_2s^2,\int_{u_0}^{u_1}\cos\theta(t)dt\right)$, where $C_0$, $C_1$, and $C_2$ are constant vectors in $\mathbb{R}^3$. But, for $u_1$, the integral curve of $X_2$ is $\Phi(u_1,v)$, that can be parametrized by arc length as
$$
\Phi(u_1,s)=\left(\varphi\left(\frac{s}{\alpha}\right)e^{\pm\int_{u_0}^{u_1}\sin\theta(t)dt}+Ce^{\mp\int_{u_0}^{u_1}\sin\theta(t)dt},\int_{u_0}^{u_1}\cos\theta(t)dt\right),
$$
by taking $v=s/\alpha$, where $\alpha=e^{\pm\int_{u_0}^{u_1}\sin\theta(t)dt}$. Thus, we obtain $\varphi(v)=C_0+C_1v+C_2v^2$. From equations \eqref{eq:vector1}, one easily shows that
$$
\langle C_0,C_0\rangle_{\mathbb{L}^3}=\langle C_0,C_1\rangle_{\mathbb{L}^3}=\langle C_1,C_2\rangle_{\mathbb{L}^3}=\langle C_2,C_2\rangle_{\mathbb{L}^3}=0, 
$$
$$
\langle C_0,C_2\rangle_{\mathbb{L}^3}=-\frac{1}{2},\quad \langle C_1,C_1\rangle_{\mathbb{L}^3}=1,
$$
and
$$
\langle C_0,C\rangle_{\mathbb{L}^3}=-\frac{1}{2},\quad \langle C_1,C\rangle_{\mathbb{L}^3}=\langle C_2,C\rangle_{\mathbb{L}^3}=0.
$$
The vector $C_1$ is not light-like and therefore the other three vectors $C_0$, $C_2$, and $C$ lie in the same plane. It is easy then to see, from the above equations, that $C_2=C$.

To conclude the proof of the first item of the theorem, let us recall the biconservative equation \eqref{eq:l2}, which, in this case, assumes the form
$$
\pm\cos\theta=3f+\frac{f}{f'}\sin\theta\cos\theta,
$$
where $f$ is the mean curvature function and $\theta'=\pm\cos\theta-2f$. We shall consider only the first case, when $a=1$, i.e., $\lambda_2=\cos\theta$, the treatment of the other one being quite similar. Let us look for $f$ of the form $f=g\cos\theta$. It is straightforward to show that the biconservative equation becomes
$$
(1-3g)(2g-1)g'=(6g^3-5g^2+2g)(\ln(|\cos\theta|)',
$$
whose solution is given by \eqref{eq:theta1}.

Next, assume that $a^2\neq 1$ throughout the surface, which means that $a^2-1$ does not change the sign on $\Sigma^2$. In this case, equation \eqref{eq:beta_hxr} and Proposition \ref{prop1} give $\beta(u,v)=\tau(v)/\sqrt{|a^2-1|}$, and \eqref{eq:phiuu2} and \eqref{eq:phiuv2} lead to
$$
\Phi(u,v)=\left(\frac{1}{\sqrt{|a^2-1|}}(\varphi(v)+Ca(u)),\int_{u_0}^{u}\cos\theta(t)dt\right),
$$
where $\varphi(v)$ and $C=\cst$ are vectors in $\mathbb{R}^3$. 

As in the previous case, we must have 
$$
\Phi_1^2+\Phi_2^2-\Phi_3^2=-1,\quad\langle\Phi_u,\Phi_u\rangle=1,\quad\langle\Phi_u,\Phi_v\rangle=0,\quad\langle\Phi_v,\Phi_v\rangle=\beta^2,
$$ 
and, therefore,
\begin{equation}\label{eq:vector2}
\langle\varphi(v),\varphi(v)\rangle_{\mathbb{L}^3}=\pm1,\quad\langle C,C\rangle_{\mathbb{L}^3}=\mp 1,\quad\langle \varphi(v),C\rangle_{\mathbb{L}^3}=0,\quad\langle \varphi'(v),\varphi'(v)\rangle_{\mathbb{L}^3}=\tau^2(v),
\end{equation}
as $a^2>1$ or $a^2<1$. We change the coordinates in this situation too, to get $\langle \varphi'(v),\varphi'(v)\rangle_{\mathbb{L}^3}=1$.

Now, let us consider an integral curve $\gamma=\gamma(s)=\gamma(u_1,s)$ of $X_2$, parametrized by arc length. As before, using Lemma~\ref{lemma1}, Proposition \ref{prop1}, and equation \eqref{eq:atilde2}, we have
\begin{eqnarray*}
\gamma''&=&\widetilde\nabla_{X_2}X_2=\bar\nabla_{X_2}X_2+\widetilde\sigma(X_2,X_2)\\
&=&-a\sin\theta X_1+a\cos\theta\eta+\widetilde\xi,
\end{eqnarray*}
and then
$$
\gamma'''=\widetilde\nabla_{X_2}\widetilde\nabla_{X_2}X_2=-(a^2-1)X_2.
$$
Hence, the curve $\gamma$ satisfies the equation
\begin{equation}\label{eq:gamma}
\gamma'''+(a^2-1)\gamma'=0,
\end{equation}
in $\mathbb{L}^3\times\mathbb{R}$.

If $a^2>1$, working as in the previous case, this equation shows that
$$
\varphi(v)=C_0+C_1\cos v+C_2\sin v,
$$
with $C_0$, $C_1$, and $C_2$ constant vectors in $\mathbb{R}^3$. From equations \eqref{eq:vector2}, one obtains
$$
\langle C_0,C_0\rangle_{\mathbb{L}^3}=0,\quad\langle C_0,C_1\rangle_{\mathbb{L}^3}=\langle C_0,C_2\rangle_{\mathbb{L}^3}=\langle C_1,C_2\rangle_{\mathbb{L}^3}=0, 
$$
$$
\langle C_1,C_1\rangle_{\mathbb{L}^3}=\langle C_2,C_2\rangle_{\mathbb{L}^3}=1,
$$
and
$$
\langle C_0,C\rangle_{\mathbb{L}^3}=\langle C_1,C\rangle_{\mathbb{L}^3}=\langle C_2,C\rangle_{\mathbb{L}^3}=0.
$$
Using the properties of the Lorentzian cross-product, since the vector $C$ is not like-like, it follows that it is orthogonal to $C_1$ and $C_2$, and, therefore, collinear with $C_1\otimes C_2$.  We also have $\langle C_1\otimes C_2,C_1\otimes C_2\rangle_{\mathbb{L}^3}=-1$ and this shows that we can choose $C=C_1\otimes C_2$.  Moreover, as none of the vectors $C_1$, $C_2$, and $C_1\otimes C_2$ is light-like, it follows, from the above identities, that $C_0$ must vanish.

Finally, if $a^2<1$, equation \eqref{eq:gamma} leads to
$$
\varphi(v)=C_0+C_1e^v+C_2e^{-v},
$$
where $C_0$, $C_1$, and $C_2$ are constant vectors in $\mathbb{R}^3$. Again using \eqref{eq:vector2}, we have
$$
\langle C_0,C_0\rangle_{\mathbb{L}^3}=\langle C_0,C_1\rangle_{\mathbb{L}^3}=\langle C_0,C_2\rangle_{\mathbb{L}^3}=0,\quad\langle C_1,C_2\rangle_{\mathbb{L}^3}=-\frac{1}{2}, 
$$
$$
\langle C_1,C_1\rangle_{\mathbb{L}^3}=\langle C_2,C_2\rangle_{\mathbb{L}^3}=0,
$$
and
$$
\langle C_0,C\rangle_{\mathbb{L}^3}=\langle C_1,C\rangle_{\mathbb{L}^3}=\langle C_2,C\rangle_{\mathbb{L}^3}=0.
$$
We also have $\langle C_1\otimes C_2,C_1\otimes C_2\rangle_{\mathbb{L}^3}=1/4$ and, since $C$ is orthogonal to $C_1$ and $C_2$, one can choose $C=2(C_1\otimes C_2)$.  In the same way as above, it follows that $C_0$ vanishes.

As in the case of biconservative surfaces in $\mathbb{S}^2\times\mathbb{R}$, it is straightforward to put the biconservative condition \eqref{eq:l1} in the form \eqref{eq:bicons_hxr3}, which concludes the proof.
\end{proof}

\begin{remark} For biconservative surfaces in $\mathbb{H}^2\times\mathbb{R}$ given by Theorem \ref{thm:local_hxr}, one observes that identities \eqref{eq:phivv2} are satisfied, even though they were not explicitly used to determine the local equations of the surfaces.
\end{remark}

\begin{remark} The Gaussian curvature of a non-CMC biconservative surface $\Sigma^2$ in $M^2(c)\times\mathbb{R}$, with $\nabla f$ a principal direction, is given by 
$$
K=K(u)=\lambda_1\lambda_2+c\sin^2\theta=2af\cos\theta-a^2\cos^2\theta+c\sin^2\theta.
$$
If we denote $f=g\cos\theta$, a straightforward computation leads to
$$
K'=2a(a^2+c)g'+(10a^2+6c)a'g-8aa'g^2-4aa'(a^2+c),
$$
if either $c=1$ or $c=-1$ and $a^2\neq 1$ at a point $q\in U$. In the later case we work on an open neighborhood $W\subset U$ of $q$ such that $a^2\neq 1$ everywhere on $W$. 

Let us assume that $a^2+c$ does not vanish identically on $U$. The surface has constant Gaussian curvature if and only if $K'=0$, and this condition can be written, in this case, as
\begin{equation}\label{eq:11}
2a(a^2+c)\frac{dg}{da}=8ag^2-(10a^2+6c)g+4a(a^2+c).
\end{equation}

In general, the biconservative equation \eqref{eq:l2} can be written as
\begin{equation}\label{eq:12}
(a^2+c)(3g-a)\frac{dg}{da}=6g^3-5ag^2+(a^2+c)g.
\end{equation}

If $c=1$, then \eqref{eq:11} is a Riccati equation with the general solution 
$$
g=\frac{a}{2}+\frac{a(a^2+1)}{c_0a^4+1},\quad c_0=\cst>0.
$$
It is then easy to verify that $g$ does not satisfy \eqref{eq:12}.

If $c=-1$ the general solution of equation \eqref{eq:11} is
$$
g=\frac{a}{2}+\frac{a^3}{c_0a^4-2a^2(c_0-1)+c_0-1},\quad c_0=\cst>0,
$$
which does not satisfy the biconservative equation \eqref{eq:12}.

Finally, when $c=-1$ and $a=\pm 1$, the Gaussian curvature is constant if and only if
$$
g'=(4g\mp 8g^2)\sin\theta,
$$
and it is again straightforward to verify that such a surface cannot be biconservative in this case too.

Hence, there are no non-CMC biconservative surfaces with $\nabla f$ a principal direction and constant Gaussian curvature in $M^2(c)\times\mathbb{R}$. 
\end{remark}


\begin{thebibliography}{99}


\bibitem{AK} \c S. Andronic, A. Kayhan, {\it Rigidity results for compact biconservative hypersurfaces in space forms}, J. Geom. Phys. 212 (2025), Paper No. 105460.


\bibitem{CMOP} R. Caddeo, S. Montaldo, C. Oniciuc, and P. Piu, {\it Surfaces in three-dimensional space forms with divergence-free stress-bienergy tensor}, Ann. Mat. Pura Appl. (4) 193 (2014), 529--550.

\bibitem{43} B. Y. Chen, {\it Some open problems and conjectures on submanifolds of finite type}, Soochow J. Math. 17 (1991), 169--188.

\bibitem{Ishikawa} B. Y. Chen, S. Ishikawa, {\it Biharmonic surfaces in pseudo-Euclidean spaces}, Mem. Fac. Sci. Kyushu Univ. Ser. A 45 (1991), 323--347.




\bibitem{ES} J.~Eells, J. H.~Sampson, \textit{Harmonic mappings of Riemannian manifolds}, Amer. J. Math. 86 (1964), 109--160.


\bibitem{FO} D. Fetcu, C. Oniciuc, {\it Biharmonic and biconservative hypersurfaces in space forms}, Differential geometry and global analysis--in honor of Tadashi Nagano, 65--90, Contemp. Math. 777, Amer. Math. Soc., [Providence], RI, 2022.


\bibitem{FSol} D. Fetcu, {\it The rigidity of biconservative surfaces in $\sol$}, Ann. Mat Pura Appl., to appear.


\bibitem{Hasanis} T. Hasanis, T. Vlachos, {\it Hypersurfaces in $\mathbb{R}^{4}$ with harmonic mean curvature vector field}, Math. Nachr. 172 (1995), 145--169.

\bibitem{Jiang} G. Y. Jiang, {\it 2-harmonic maps and their first and second variational formulas}, Chinese Ann. Math. Ser. A 7 (1986), 389--402.

\bibitem{Jiang87} G. Y. Jiang, {\it The conservation law for 2-harmonic maps between Riemannian manifolds}, Acta Math. Sinica 30 (1987),  220--225.

\bibitem{J2} G. Y. Jiang, {\it Some nonexistence theorems on 2-harmonic and isometric immersions in Euclidean space}, Chinese Ann. Math. Ser. B 8 (1987), 377--383.



\bibitem{LMO} E. Loubeau, S., Montaldo, and C. Oniciuc, {\it The stress-energy tensor for biharmonic maps}, Math. Z. 259 (2008), 503--524.





\bibitem{MOR} S. Montaldo, C. Oniciuc, and A. Ratto, {\it Biconservative surfaces}, J. Geom. Anal. 26 (2016), 313--329.

\bibitem{MOP} S. Montaldo, I. I. Onnis, and A. P. Passamani, {\it Biconservative surfaces in BCV-spaces}, Math. Nachr. 290 (2017), 2661--2672.


\bibitem{MP} S. Montaldo, A. P\'ampano, {\it On the existence of closed biconservative surfaces in space forms}, Comm. Anal. Geom. 31 (2023), 291--319. 

\bibitem{SN} S. Nistor, {\it Complete biconservative surfaces in $\mathbb{R}^3$ and $\mathbb{S}^3$}, J. Geom. Phys. 110 (2016), 130--153.

\bibitem{Nistor1} S. Nistor, {\it On biconservative surfaces}, Differential Geom. Appl. 54 (2017), 490--502.

\bibitem{NO} S. Nistor, C. Oniciuc,  {\it On the uniqueness of complete biconservative surfaces in 3-dimensional space forms}, Ann. Sc. Norm. Super. Pisa Cl. Sci. (5) 23 (2022), 1565--1587.

\bibitem{Ou} Y. L. Ou,, {\it Biharmonic hypersurfaces in Riemannian manifolds}, Pacific J. Math. 248 (2010), 217--232.

\bibitem{Chen-Ou} Y. L. Ou, B. Y. Chen, {\it Biharmonic Submanifolds and Biharmonic Maps in Riemannian Geometry}, World Scientific Publishing Co. Pte. Ltd., Hackensack, NJ, 2020.

\bibitem{OW} Y. L. Ou, Z. P. Wang, {\it Constant mean curvature and totally umbilical biharmonic surfaces in 3-dimensional geometries}, J. Geom. Phys. 61 (2011), 1845--1853.


\bibitem{R} J. G. Ratcliffe, {\it Foundations of Hyperbolic Manifolds}, Third edition. Graduate Texts in Mathematics, 149, Springer, Cham, 2019. 

\end{thebibliography}
\end{document}